\documentclass[12pt]{amsart}
\usepackage[utf8]{inputenc}

\usepackage{amssymb,amsthm,amsfonts,enumerate, epsfig,epstopdf,caption, graphicx, xcolor}
\usepackage{amsmath}
\usepackage{subcaption}
\usepackage{hyperref}

\newcommand{\Z}{{\mathbb{Z}}}

\newtheorem{theorem}{Theorem}[section]

\newtheorem{lemma}[theorem]{Lemma}
\newtheorem{conjecture}[theorem]{Conjecture}
\newtheorem{remark}[theorem]{Remark}

\title[Ivanov's finite-index Frattini conjecture]{Ivanov's  finite-index Frattini conjecture for subgroups of mapping class groups }
\author{Renaud Detcherry and Efstratia Kalfagianni}
\address[]{Université Bourgogne Europe, CNRS, IMB UMR 5584, F-21000 Dijon, France \& Institut Universitaire de France}
\email{renaud.detcherry@u-bourgogne.fr}
\address[]{Department of Mathematics, Michigan State University, East
	Lansing, MI, 48824, USA}
\email[]{kalfagia@msu.edu}

\begin{document}
\maketitle

\begin{abstract}
Let $S_g$ be a closed oriented surface of genus $g$. In \cite{Ivanov}, Ivanov conjectured that if $G$ is a finitely generated subgroup of $\mathrm{Mod}(S_g),$ then its finite-index Frattini subgroup $\Phi_f(G)$ is nilpotent. We prove this conjecture, using recent partial results of the first author on the Andersen-Masbaum-Ueno conjecture \cite{Detcherry}.
\end{abstract}

\section{Introduction}
\label{sec:intro}
For $G$ a group, the Frattini subgroup $\Phi(G)$ is the intersection of all maximal subgroups of $G$, while the \emph{finite-index Frattini subgroup} $\Phi_f(G)$ is the intersection of all finite index maximal subgroups of $G$. Frattini subgroups play a central role in the theory of finite and infinite groups. On the other hand, one striking result about finite-index Frattini subgroups is the following classical theorem of Platonov \cite{Platonov}:

\begin{theorem}
	\label{thm:Platonov}\cite{Platonov} For any finitely generated linear group $G$, the subgroup $\Phi_f(G)$ is nilpotent.
\end{theorem}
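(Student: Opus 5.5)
The approach is to combine the classical theorem of Frattini for finite groups with the residual finiteness of finitely generated linear groups, and then to use algebraic-group structure to pass from "nilpotent in every finite quotient" to "nilpotent". Write $G\subset GL_n(K)$. Since $G$ is finitely generated, $G\subset GL_n(A)$ for a finitely generated subring $A\subset K$. For every maximal ideal $\mathfrak m$ of $A$, the field $A/\mathfrak m$ is finite, and we get congruence quotients $\pi_{\mathfrak m}\colon G\to Q_{\mathfrak m}\subset GL_n(A/\mathfrak m)$. By Malcev these quotients, together with those coming from powers $\mathfrak m^k$, separate the points of $G$.

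\emph{Step 1: finite quotients.} Let $H=\Phi_f(G)$; it is normal in $G$. For any surjection $\pi\colon G\to Q$ with $Q$ finite, every maximal subgroup $M$ of $Q$ pulls back to a maximal subgroup of $G$ of finite index. Hence $\pi(H)\subset \pi(\pi^{-1}(M))=M$ for every such $M$, so $\pi(H)\subset\Phi(Q)$, and $\Phi(Q)$ is nilpotent by Frattini's theorem. Therefore $H$ is residually (finite nilpotent). This alone is not enough, because the nilpotency classes of the $\Phi(Q)$ need not be bounded; for instance, generalized quaternion $2$-groups sit in $GL_2(\mathbb F_q)$.

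\emph{Step 2: $H$ is virtually solvable.} Let $\overline{H}$ be the Zariski closure of $H$ and $\overline{H}^{\circ}$ its identity component. Suppose the semisimple quotient of $\overline{H}^{\circ}$ is nontrivial. Then the Tits alternative and strong approximation (Nori--Weisfeiler, or Matthews--Vaserstein--Weisfeiler) say that for infinitely many $\mathfrak m$ the image $\pi_{\mathfrak m}(H\cap \overline H^{\circ})$ contains a quasisimple group of Lie type over $A/\mathfrak m$ whose order tends to infinity. Such a group cannot lie in the nilpotent group $\Phi(Q_{\mathfrak m})$, which is a contradiction. Hence $\overline H^{\circ}$ is solvable. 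This step is the main obstacle: one must arrange strong approximation for a possibly non-finitely-generated normal subgroup $H$. I would first replace $H$ by a finitely generated subgroup of $H$ with the same Zariski closure, which is possible by Noetherianity.

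\emph{Step 3: from solvable to nilpotent.} Now $H$ has a finite-index subgroup $H_0=H\cap\overline H^{\circ}$ that is solvable and triangularizable over $\bar K$ (Lie--Kolchin). Its unipotent radical $U$ is nilpotent of class at most $n-1$. By Step 1, for each prime $p$ the image of $H$ in the quotients modulo ideals of residue characteristic $p$ is nilpotent. The diagonal (semisimple) parts of two elements of $H_0$ whose commutator lies in $U$ must therefore commute modulo $U$ in these quotients. Since infinitely many characteristics are available, this forces $H_0$ to be nilpotent of class bounded in terms of $n$, with the bound coming from the Lie--Kolchin flag. Finally, $H/H_0$ is finite and its image in the finite quotients is Frattini. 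Applying the Frattini argument in one sufficiently deep finite quotient that is injective on $H/H_0$ shows that $H$ is the product of $H_0$ and a finite group acting nilpotently on it. Hence $H$ is nilpotent, which proves Theorem~\ref{thm:Platonov}.
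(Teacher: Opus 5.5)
The paper gives no proof of this statement. It is quoted from Platonov and used as a black box (applied to $\rho_p(G)$ and $q(G)$), so your argument has to stand on its own. Step~1 is correct. Step~2 is not the real obstacle, and strong approximation is unnecessary there. Each $\pi_{\mathfrak m}(H)$ is a nilpotent subgroup of $\mathrm{GL}_n(A/\mathfrak m)$, so Zassenhaus' bound on the derived length of solvable linear groups gives $\pi_{\mathfrak m}(H^{(d(n))})=1$ for every maximal $\mathfrak m$. Since a finitely generated domain is a Jacobson ring, $\bigcap_{\mathfrak m}\mathfrak m=0$, hence $H^{(d(n))}=1$. The genuine gap is Step~3, which is the heart of Platonov's theorem and which your proposal asserts rather than proves.

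\emph{(i) The argument for $H_0$.} Your sentence that the diagonal parts of elements of $H_0$ ``commute modulo $U$'' is vacuous, since diagonal parts of triangular matrices always commute. What you need is that semisimple parts \emph{centralize} the unipotent part, layer by layer. After triangularizing over a finitely generated extension $A'$ of $A$, suppose $h\in H_0$ has diagonal entries $\chi_i(h)$, and some $u\in U$ in the $k$-th layer of the unitriangular filtration has nonzero entry $a$ at position $(i,i+k)$. Then you need $\chi_i(h)=\chi_{i+k}(h)$. This holds because in the finite nilpotent group $\pi_{\mathfrak m}(H_0)$, the $p'$-part of $\pi_{\mathfrak m}(h)$ (which has the same diagonal) commutes with the $p$-element $\pi_{\mathfrak m}(u)$. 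So $\chi_i(h)-\chi_{i+k}(h)$ lies in every maximal ideal of $A'$ not containing $a$, and it vanishes because $A'[1/a]$ is Jacobson. ``Infinitely many characteristics'' is not the mechanism. It is not even available when $\operatorname{char}K=p>0$, where all residue fields have characteristic $p$, yet the theorem still holds.

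\emph{(ii) The passage from $H_0$ to $H$.} This is the more serious problem, and it is wrong as stated. $H$ need not be a product of $H_0$ with a finite group, and no single finite quotient can certify nilpotency. For example, $D_\infty=\langle t\rangle\rtimes\langle x\rangle$ with $H_0=\langle t\rangle$ maps onto the nilpotent group $D_8$ injectively on $D_\infty/H_0$, yet $D_\infty$ is not nilpotent. To exclude such configurations you must use infinitely many quotients. In a nilpotent image, the $2$-element $\bar x$ both inverts and centralizes the $2'$-part of $\bar t$, so you need a quotient in which the order of $\bar t$ is not a power of $2$. For $t$ semisimple of infinite order this requires a further input. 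One option is to reduce modulo $\mathfrak m^k$ with odd residue characteristic $p$, so that the $p$-part of the order of $\bar t$ grows with $k$ by Krull's intersection theorem. In characteristic $2$, a Jacobson argument does the job instead. You would also need analogous arguments for the action of elements of $H\setminus H_0$ on $U$. None of this is in your proposal.
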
 

In this paper, the infinite groups we are concerned with are mapping class groups of surfaces, and their subgroups.

Let $S=S_g$ be a closed oriented surface of genus $g\ge 3$. The mapping class group $\mathrm{Mod}(S)$ is the group of isotopy classes of orientation-preserving homeomorphisms of $S$. Let $J_1(S)$ denote the Torelli subgroup of $\mathrm{Mod}(S)$, which consists of all elements that act trivially on $H_1(S).$ Finally, let $J_2(S)$ be the Johnson kernel, which is the subgroup generated by separating Dehn twists.

It is well known that the groups $\mathrm{Mod}(S)$ share many properties with finitely generated linear groups (among which residual finiteness, Tits alternative and virtual torsion-freeness).

In \cite[Thm 10.7]{IvanovSurvey}, Ivanov proved that the ordinary Frattini subgroup, $\Phi(G)$, is nilpotent for any finitely generated subgroup $G$ of $\mathrm{Mod}(S_g)$. Ivanov then asked whether the stronger statement, using $\Phi_f(G)$ instead of $\Phi(G),$ is also true \cite[Problem 15]{Ivanov}. The latter is the analogue of Platonov's theorem for subgroups of $\mathrm{Mod}(S).$ In this paper, we prove Ivanov's conjecture:

\begin{theorem} \label{thm:Ivanov} Let $g\geq 1$ and $G$ be a finitely generated subgroup of $\mathrm{Mod}(S_g)$. Then $\Phi_f(G)$ is nilpotent.
\end{theorem}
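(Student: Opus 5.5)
The plan is to reduce to Platonov's Theorem~\ref{thm:Platonov} via quantum representations, with Ivanov's classification of subgroups of $\mathrm{Mod}(S)$ handling the pieces that quantum representations do not see. For $g\le 2$, $\mathrm{Mod}(S_g)$ is linear: it is $SL_2(\Z)$ for $g=1$, and linear for $g=2$ by Bigelow--Budney/Korkmaz. In these cases Theorem~\ref{thm:Platonov} applies directly, so we assume $g\ge 3$.

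The key observation is functorial. Let $\rho\colon G\to H$ be a homomorphism with $H$ finitely generated linear and $\rho(G)$ of finite index in some group; more simply, take $H=\rho(G)$. Every finite-index maximal subgroup $M$ of $\rho(G)$ pulls back to a finite-index maximal subgroup $\rho^{-1}(M)$ of $G$. Hence $\rho(\Phi_f(G))\subseteq \Phi_f(\rho(G))$, which is nilpotent by Theorem~\ref{thm:Platonov}, since $\rho(G)$ is finitely generated and linear.

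Take $\rho$ to be a finite product of quantum $SO(3)$ (or $SU(2)$) representations $\rho_p$ of $\mathrm{Mod}(S_g)$, restricted to $G$. We then need to control the kernel $\ker\rho\cap\Phi_f(G)$. Here we use the partial Andersen--Masbaum--Ueno results of \cite{Detcherry}. The relevant input is that for suitable levels $p$, a pseudo-Anosov element, or more generally an element not lying in a reducible/periodic configuration, has infinite-order image. So the kernel of the product representation, intersected with $G$, cannot contain "generic" elements.

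The proof then splits along Ivanov's trichotomy for $G$.

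\textbf{(a) $G$ contains a pseudo-Anosov and is irreducible.} Either $G$ is virtually cyclic, in which case $\Phi_f(G)$ is finite-by-cyclic and nilpotent by direct inspection, or $G$ contains a free group of pseudo-Anosovs. In the latter case we aim to show $\Phi_f(G)$ is finite. Any infinite normal subgroup of such a $G$ contains pseudo-Anosovs, which survive in $\rho$ by \cite{Detcherry}. Thus $\Phi_f(G)\cap\ker\rho$ is a normal subgroup consisting of periodic/reducible elements, hence finite. It follows that $\Phi_f(G)$ is finite-by-nilpotent. Upgrading this to nilpotent uses the standard Frattini argument: a finite normal subgroup $N\le \Phi_f(G)$ with $\Phi_f(G)/N$ nilpotent forces $\Phi_f(G)$ nilpotent, as in Platonov and Ivanov's proof for $\Phi(G)$. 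Concretely, one uses that for finite-index normal $K$, $\Phi_f(G)K/K\le\Phi(G/K)$, which is nilpotent, and residual finiteness lets us separate $N$.

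\textbf{(b) $G$ is reducible with canonical reduction system $\sigma$.} Pass to the finite-index pure subgroup and cut along $\sigma$. We obtain a homomorphism to a product of mapping class groups of the components, with kernel a free abelian group of multitwists that is central in the pure part. We apply (a) or the finite case to each image and use the functorial inclusion above. The result is that $\Phi_f(G)$ is a central-by-nilpotent extension, up to finite index effects, which gives nilpotency.

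\textbf{(c) $G$ is finite.} This case is handled directly: $\Phi_f(G)=\Phi(G)$ is nilpotent for finite groups.

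The main obstacle is case (a)/(b): converting "nilpotent modulo a small kernel" into genuine nilpotency, and verifying that \cite{Detcherry} gives infinite-order images for exactly the elements needed. Finite-index maximal subgroups are not well behaved under passage to finite-index subgroups, so the reduction in (b) needs care. I would first try to show $\Phi_f(G)\le \Phi_f$-type subgroups of the pure subgroup via Platonov's $p$-adic argument, adapted using the Frattini property $G=\Phi_f(G)H\Rightarrow G=H$ for finite-index $H$.
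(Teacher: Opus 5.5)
There is a genuine gap, and it sits at the input you attribute to \cite{Detcherry}. Theorem~\ref{thm:AMU-Johnson} only says that non-trivial elements of $[J_2(S),J_2(S)]$ have infinite-order image under $\rho_p$ for large $p$. For general pseudo-Anosov elements this is the AMU Conjecture~\ref{conj:AMU}, which is open. So the key step of (a), that pseudo-Anosovs in infinite normal subgroups of $G$ ``survive in $\rho$'', is unsupported. Since (b) is reduced to (a), the argument proves nothing.

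Using a fixed finite product $\rho=\prod_{p\in P}\rho_p$ fails even if you grant full AMU. Each $\rho_p(T_c)$ has finite order, so $\ker\rho$ contains a power $T_c^M$ of every Dehn twist. Hence it contains pseudo-Anosovs such as $T_a^M T_b^{-M}$ with $a,b$ filling. So no single finite product can show that $\Phi_f(G)\cap\ker\rho$ is free of pseudo-Anosovs. Your residual-finiteness upgrade from finite-by-nilpotent to nilpotent is fine, but its hypothesis is never established. In (b), $\Phi_f(G)$ does not pass to the pure finite-index subgroup; this is exactly the ``finite-index effect'' left unresolved.

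You are missing two ideas.

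\emph{Compactness.} $\rho_p$ takes values in the compact group $\mathrm{PU}(d_p)$, and a nilpotent subgroup of a compact Lie group has finite derived subgroup (Lemma~\ref{lemma:NilpotentUnitary}). Since $\rho_p(\Phi_f(G))$ is nilpotent for every $p$, every element of $[C,C]$, with $C=\Phi_f(G)\cap J_2(S)$, has finite-order image under \emph{every} $\rho_p$. This element-wise statement, valid for all $p$, is what can be played against Theorem~\ref{thm:AMU-Johnson}, and it forces $[C,C]=1$.

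\emph{The Johnson filtration instead of Ivanov's trichotomy.} $C$ is abelian. $(\Phi_f(G)\cap J_1(S))/C$ embeds in the abelian group $J_1(S)/J_2(S)$. $\Phi_f(G)/(\Phi_f(G)\cap J_1(S))$ is nilpotent by Platonov applied to the image in $\mathrm{Sp}_{2g}(\Z)$. Hence $\Phi_f(G)$ is solvable, so it is finitely generated and virtually abelian by Birman--Lubotzky--McCarthy. Lemma~\ref{lemma:FrattiniVirtualAb} (a finitely generated virtually abelian normal subgroup contained in $\Phi_f$ is nilpotent) then finishes the proof.
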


\begin{remark}
	The above theorem implies the analogous statement replacing the closed surface $S_g$ by a compact oriented surface $S_{g,b}$ with genus $g$ and $b\geq 1$ boundary components, since $\mathrm{Mod}(S_{g,b})$ embeds in  $\mathrm{Mod}(S_{g+b})$.
\end{remark}
Note that $\mathrm{Mod}(S_1)\simeq \mathrm{SL}_2(\Z)$ is linear. By \cite{BigelowBudney} or \cite{Korkmaz}, $\mathrm{Mod}(S_2)$ is linear. Thus Ivanov's conjecture for $g\leq 2$ is a consequence of Platonov's theorem. 

In \cite{MasbaumReid}, Masbaum and Reid settled Ivanov's conjecture for normal subgroups of $\mathrm{Mod}(S_g)$ and subgroups containing a finite index subgroup of the Torelli subgroup $J_1(S_g)$. The method used by Masbaum and Reid involves the use of the so-called \emph{$\mathrm{SO}(3)$-Witten-Reshetikhin-Turaev quantum representations} at a primitive $p$-th root of unity
$$\rho_p:\mathrm{Mod}(S_g)\longrightarrow \mathrm{PU}(d_p)$$
where $p\geq 5$ is a prime number and $d_p$ is an explicit integer. Those are the associated projective mapping class group representations of the $\mathrm{SO}_3$-WRT TQFTs; we refer to \cite{BHMV}  for a definition. By a result of Larsen and Wang \cite{LarsenWang}, those representations have dense images for $g\geq 2$. In \cite{MasbaumReid}, the authors deduce that for $g\geq 3$, the group $\mathrm{Mod}(S_g)$, and some of its subgroups, are residually finite simple, which implies that they have trivial finite-index Frattini subgroups.

In contrast to Masbaum and Reid's strategy, our proof does not use results about Zariski density for quantum representations. Instead, we rely on partial results on the Andersen-Masbaum-Ueno conjecture (AMU conjecture), which were recently obtained by the first author in \cite{Detcherry}. The AMU conjecture may be stated as follows\footnote{The version we give of the AMU conjecture is a slightly weakened one, since the original statement also considers the representations $\rho_r$ for all odd integers $r$.}:

\begin{conjecture}\label{conj:AMU} \cite{AMU} Let $f\in \mathrm{Mod}(S)$ be a mapping class with a pseudo-Anosov part. Then $\rho_p(f)$ has infinite order for all large enough primes $p$.
\end{conjecture}

In \cite{Detcherry}, the following partial result was obtained:

\begin{theorem}
	\label{thm:AMU-Johnson}\cite[Theorem 1.1]{Detcherry} Let $f$ be a non-trivial element of $[J_2(S),J_2(S)].$ Then $\rho_p(f)$ has infinite order for all large enough primes $p$.
\end{theorem}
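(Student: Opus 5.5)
The plan is to combine three ingredients: an integral structure on the quantum representations, a congruence filtration compatible with the Johnson filtration, and asymptotic faithfulness. Let $\zeta_p$ be a primitive $p$-th root of unity, $\mathcal{O}_p=\Z[\zeta_p]$, and $h=1-\zeta_p$, so that $(h)$ is the unique prime of $\mathcal{O}_p$ above $p$, with ramification index $e=p-1$.

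\textbf{Integral lattices.} By Gilmer--Masbaum, the $\mathrm{SO}(3)$-TQFT admits $\mathcal{O}_p$-lattices preserved by a linear lift of $\rho_p$ defined on a central extension of $\mathrm{Mod}(S)$. We will work with this lift $\tilde\rho_p$ and pass back to $\mathrm{PU}(d_p)$ at the end.

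\textbf{Step 1: the congruence statement.} We want to show
\[
\tilde\rho_p(J_2(S))\subset 1+h\,M_{d_p}(\mathcal{O}_p),\qquad \tilde\rho_p([J_2(S),J_2(S)])\subset 1+h^2 M_{d_p}(\mathcal{O}_p).
\]
The first inclusion reduces to generators. A separating twist $t_\gamma$ acts on the lattice through a diagonal matrix of twist eigenvalues $\zeta_p^{c(c+2)/4}$-type, times a scalar that we normalize away using the central extension. One then checks, using the fusion basis adapted to a pants decomposition containing $\gamma$, that the reduction of $t_\gamma$ mod $h$ is trivial. This works because $\zeta_p\equiv 1 \pmod h$ and the framing correction is also $\equiv 1$. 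The second inclusion is then formal: if $A=1+hX$ and $B=1+hY$, then $[A,B]\equiv 1+h^2[X,Y]\pmod{h^3}$. The genuinely delicate point is the normalization of the projective ambiguity, so that the chosen lift of a commutator is an honest commutator of lifts. Since commutators do not see central scalars, this is automatic once the lifts of separating twists are fixed to be $\equiv 1 \pmod h$.

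\textbf{Step 2: torsion-freeness of the deep congruence subgroup.} Suppose $x\in 1+h^k M_n(\mathcal{O}_p)$ has finite order and $x\neq 1$. Write $x^\ell=1$ and reduce to $\ell$ prime. If $\ell\neq p$, binomial expansion of $(1+h^kY)^\ell$ gives $\ell h^kY\equiv 0$ to leading order, a contradiction since $\ell$ is a unit. If $\ell=p$, the leading term $ph^kY$ has valuation $e+k$, while $h^{pk}Y^p$ has valuation $\geq pk$. These valuations can only cancel if $pk\le e+k$, that is, $k\le e/(p-1)=1$. Hence for $k\geq 2$ the group $1+h^kM_n(\mathcal{O}_p)$ is torsion-free. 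The same holds projectively: a finite-order element in $\mathrm{PU}$ lifts to an element with $x^\ell$ scalar, and the scalar is forced to be $\equiv 1 \pmod{h^2}$. By the same valuation argument, applied after adjusting by a root of unity, that scalar must be trivial.

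\textbf{Step 3: nontriviality.} For a nontrivial $f\in[J_2,J_2]$, asymptotic faithfulness (Andersen; Freedman--Walker--Wang) gives $\rho_p(f)\neq 1$ for all large $p$. Combined with Steps 1 and 2, $\rho_p(f)$ then has infinite order for all large primes $p$.

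\textbf{Main obstacle.} I expect the main difficulty to be Step 1, specifically verifying that the Gilmer--Masbaum lattice and the chosen lifts of separating Dehn twists really are congruent to the identity mod $h$. This requires controlling the basis change between fusion bases of different pants decompositions, which is integral but not unimodular a priori. My first attempt would be to use a lattice basis adapted to each separating curve and check that the resulting congruence is basis-independent, since $1+hM_n(\mathcal{O}_p)$ is normal in $\mathrm{GL}_n(\mathcal{O}_p)$.
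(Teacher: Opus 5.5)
The paper gives no proof of this statement; it imports it from \cite{Detcherry}. Your argument therefore has to stand on its own, and it has a genuine gap at Step 1.

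The rest of the architecture is fine. Suppose lifts of separating twists lie in $1+hM_{d_p}(\mathcal{O}_p)$ for a $\mathrm{Mod}(S)$-invariant lattice. Then $[J_2(S),J_2(S)]$ lands in $1+h^2M_{d_p}(\mathcal{O}_p)$, your valuation argument shows this group is torsion-free, and asymptotic faithfulness finishes. In the projective part of Step 2 you should also say why the scalar $\lambda$ with $x^\ell=\lambda$ is a root of unity, for instance because the lifts have root-of-unity determinants; then $\lambda\equiv 1 \pmod{h^2}$ forces $\lambda=1$. Everything therefore rests on Step 1, which is not proved.

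\textbf{Why your justification of Step 1 fails.}

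\emph{The fusion basis is not a lattice basis.} The orthogonal fusion basis of a pants decomposition is not an $\mathcal{O}_p$-basis of the Gilmer--Masbaum lattice. Already for the torus, the lattice is $\mathrm{Mod}$-invariant and contains $e_0$. It therefore contains, up to a root of unity, $\eta\,\omega=\eta\sum_c\langle e_c\rangle e_c$. The $e_0$-coordinate of this vector is $\eta=\mathcal{D}^{-1}$, which has negative $h$-adic valuation. For the same reason, the changes of basis between fusion bases ($S$-matrices, $6j$-symbols) are not integral, contrary to what you assert.

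\emph{The argument proves too much.} It never uses that $\gamma$ is separating. Every curve lies in some pants decomposition, and every twist acts diagonally, up to a global scalar, with eigenvalues $\mu_c\equiv 1 \pmod h$ in the corresponding fusion basis. Yet the conclusion is false for nonseparating curves:
\begin{itemize}
\item Nonseparating twists generate $\mathrm{Mod}(S)$, which is perfect for $g\geq 3$.
\item So every element of $\rho_p(\mathrm{Mod}(S))$ would have a lift that is a product of commutators of lifts in $1+hM_{d_p}(\mathcal{O}_p)$, hence a lift in $1+h^2M_{d_p}(\mathcal{O}_p)$.
\item By your own Step 2, the nontrivial order-$p$ element $\rho_p(t_\delta)$, with $\delta$ nonseparating, would then be trivial, which is absurd.
\end{itemize}

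\emph{The proposed fix does not help.} Normality of $1+hM_n(\mathcal{O}_p)$ in $\mathrm{GL}_n(\mathcal{O}_p)$ only transports a congruence from one lattice basis to another. It cannot start from a basis that does not span the lattice.

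\textbf{What is missing.} You need a separating-specific structural fact about the lattice: that $\mathcal{S}_p(S)$ splits as $\bigoplus_k (\mathcal{S}_p(S)\cap V_k)$ along the colors $k$ carried by $\gamma$. Equivalently, the lattice must admit an $\mathcal{O}_p$-basis on which $t_\gamma$ is diagonal. You would then propagate this to all separating twists using $\mathrm{Mod}$-invariance of the lattice and normality of $1+hM$.

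For separating curves one may try to extract this from the Gilmer--Masbaum lollipop-tree bases. There the tree edges, which are dual to separating curves, carry honest Jones--Wenzl colors, and the $h$-divided non-idempotent skeins live only on the loops. For nonseparating curves the splitting fails, as shown above. Without such an input, Step 1, and with it the whole proof, is unsupported.
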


The short proof of Theorem \ref{thm:Ivanov}, which will be given in Section \ref{sec:IvanovProof}, rests on Theorem \ref{thm:AMU-Johnson}, a Lie theoretic lemma (Lemma \ref{lemma:NilpotentUnitary}), and two lemmas about finite-index Frattini subgroups (Lemma \ref{lemma:FrattiniEpimorphism} and Lemma \ref{lemma:FrattiniVirtualAb}).

\textbf{Acknowledgements:} The IMB, host institution of the first author, receives support from the EIPHI Graduate School (contract ANR-17-EURE-0002). The first author thanks Gregor Masbaum, Alan Reid and Ramanujan Santharoubane for helpful discussions. The research of the second author is partially supported by the
NSF grants DMS-2304033,  DMS-2603192  and by a Simons Foundation travel grant.

\section{A Lie-theoretic lemma}
\label{sec:compactLieNilpotent}
One striking property of the representations $\rho_p$ is that they are unitary representations but have infinite images. In our proof of Theorem \ref{thm:Ivanov}, we will take advantage of unitarity through the following lemma:

\begin{lemma}\label{lemma:NilpotentUnitary} Let $N$ be a nilpotent subgroup of a compact Lie group. Then $[N,N]$ is finite.
\end{lemma}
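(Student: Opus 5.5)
We pass to the closure of $N$ and use the structure theory of compact Lie groups. Let $K$ be the compact Lie group and $\overline{N}$ the closure of $N$ in $K$. It is a closed subgroup, hence a compact Lie group by Cartan's closed subgroup theorem. It is also nilpotent: iterated commutator words vanishing identically on $N$ vanish on $\overline{N}$ by continuity. Since $[N,N]\subset[\overline{N},\overline{N}]$, it suffices to show that a compact nilpotent Lie group $H$ has finite derived subgroup.

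First we treat the identity component $H^0$. It is a connected compact Lie group, so its Lie algebra splits as $\mathfrak{z}\oplus\mathfrak{s}$ with $\mathfrak{z}$ abelian and $\mathfrak{s}$ semisimple. If $\mathfrak{s}\neq 0$, the associated semisimple subgroup is non-abelian. Its Lie algebra is perfect, so its lower central series never becomes trivial, contradicting nilpotency. A concrete route is to exhibit an $\mathrm{SU}(2)$ or $\mathrm{SO}(3)$ subgroup, which is perfect and hence not nilpotent. Thus $H^0$ is a torus $T$, and $H/T$ is a finite group of order $m$.

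Next we show that $H$ is virtually central, i.e. $T$ is central. The conjugation action of $H$ on $T$ factors through the finite group $F=H/T$, acting by automorphisms of $T\cong \mathbb{R}^k/\mathbb{Z}^k$. For $h\in H$, let $\phi=\mathrm{Ad}_h|_T$, an element of $\mathrm{GL}_k(\mathbb{Z})$ of finite order. Nilpotency gives that the iterated commutators $[h,[h,\dots,[h,t]]]=(\phi-1)^c(t)$ are trivial for all $t\in T$ and some $c$. Hence $(\phi-1)^c=0$ on $\mathbb{R}^k$, so $\phi$ is unipotent. A unipotent matrix of finite order is diagonalizable, and therefore $\phi=1$. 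So $T$ is central.

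Finally we bound the commutator subgroup. Since $T$ is central and $H/T$ is finite, Schur's theorem (a group whose center has finite index has finite derived subgroup) gives that $[H,H]$ is finite. Hence $[N,N]$ is finite.

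The main obstacle is the step showing that $H^0$ is a torus. The cleanest argument uses the standard fact that a connected compact Lie group is $Z\cdot[H^0,H^0]$, where $[H^0,H^0]$ is compact semisimple and perfect. A perfect group in a nilpotent group is trivial, since the lower central series of a perfect subgroup is stationary. Therefore $[H^0,H^0]=1$ and $H^0$ is abelian, i.e. a torus.
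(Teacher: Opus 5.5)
Your proof is correct. It shares the paper's outer skeleton: pass to the closure, show the identity component is central, and finish with Schur's theorem. Where it differs is in how you get centrality.

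You first use the structure theory of compact connected Lie groups ($H^0=Z\cdot[H^0,H^0]$, with $[H^0,H^0]$ semisimple and perfect) to force $H^0$ to be a torus $T$. Only then do you observe that conjugation by $h$ acts on $T$ by some $\phi\in\mathrm{GL}_k(\Z)$. This $\phi$ has finite order, because the action factors through $H/T$, and it is unipotent, because on the abelian group $T$ one has $[h,[h,\dots,[h,t]]]=(\phi-1)^c t$. Finite order plus unipotent gives $\phi=1$.

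The paper never passes through the torus. It takes $\mathrm{Ad}(x)$ on the Lie algebra $\mathfrak{n}$ of all of $\overline{N}$ and gets two facts about it:
\begin{itemize}
\item $\mathrm{Ad}(x)$ is orthogonal, for a Haar-averaged invariant inner product;
\item $\mathrm{Ad}(x)$ is unipotent, by differentiating the identically trivial map $F_x^{\circ c}$ at $1$, which gives $(\mathrm{Ad}(x)-\mathrm{id})^c=0$.
\end{itemize}
Orthogonal plus unipotent forces $\mathrm{Ad}(x)=\mathrm{id}$. This shows in one stroke that $N_0$ is both abelian and central.

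The underlying mechanism is the same in both: unipotent plus compact or finite-order implies trivial. What each route buys is different. The paper's version is more self-contained, needing only Haar measure and the chain rule. Yours relies on heavier but standard inputs: the reductive decomposition, and perfectness of compact connected semisimple groups (e.g.\ via Goto's theorem, or the fact that $[G,G]$ has Lie algebra $[\mathfrak{g},\mathfrak{g}]$). In exchange, your unipotency step is purely algebraic, since conjugation on a torus is already linear and no derivative is needed.

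One small point: the informal argument in your second paragraph ("its Lie algebra is perfect, so its lower central series never becomes trivial") does not stand on its own. It needs the group-level perfectness that you correctly supply in your final paragraph.
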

The above lemma should be completely standard to Lie groups experts. However, we were not able to find a reference for this exact statement, so we provide its elementary proof.

\begin{proof}
	Let $\overline{N}$ be the closure of $N$. If $N$ is nilpotent of nilpotency class $c$, then, since iterated commutator maps are continuous, we have that $\overline{N}$ is also nilpotent of class at most $c$. 
	
	Let $\mathfrak{n}$ be the Lie algebra of $\overline{N},$ and $N_0$ its identity component. Let 
	$$\mathrm{Ad}: \overline{N} \longrightarrow \mathrm{Aut}(\mathfrak{n})$$
	be the map induced by conjugation. Averaging a positive definite symmetric bilinear form on $\mathfrak{n}$ along a Haar measure, one gets an $\mathrm{Ad}(\overline{N})$-invariant positive definite symmetric bilinear form on $\mathfrak{n},$ see \cite[Proposition 4.24]{Knapp} for details. Thus $\mathrm{Ad}(x)$ is an orthogonal map for all $x\in \overline{N}.$ We claim that $\mathrm{Ad}(x)$ is unipotent as well. Indeed, for $x\in \overline{N},$ consider the commutator map
	$$\begin{array}{rccl}
		F_x: & N_0 & \longrightarrow & N_0
		\\ & y & \longrightarrow & xyx^{-1}y^{-1}
	\end{array}$$
	If $\overline{N}$ is nilpotent of class $c,$ then $(F_x)^{\circ c}$ is the constant map equal to $1.$ However, $F_x(1)=1$ and $dF_x(1)=\mathrm{Ad}(x)-\mathrm{id}_{\mathfrak{n}},$ thus 
	$$(d(F_x)^{\circ c})(1)=(\mathrm{Ad}(x)-\mathrm{id}_{\mathfrak{n}})^c=0.$$
	Now, the linear map $\mathrm{Ad}(x)$ is orthogonal and unipotent, hence $\mathrm{Ad}(x)=\mathrm{id}_{\mathfrak{n}}$ for all $x\in \overline{N},$ and thus $N_0$ is central in $\overline{N}$.
	
	Now, since $\overline{N}$ is compact, $\overline{N}/N_0$ is finite. By Schur's theorem (\cite{Schur}, see also \cite[Theorem 5.3]{Neumann}), if $G$ is a group such that $G/Z(G)$ is finite then $[G,G]$ is finite. Thus $[\overline{N},\overline{N}]$ is finite.

	
	

	Finally, $[N,N]\subset [\overline{N},\overline{N}],$ so $[N,N]$ is finite as well.
\end{proof}

\section{Two lemmas about $\Phi_f$}
\label{sec:groupTheoryLemma}

We give two important lemmas about finite-index Frattini subgroups, the first of which concerns their behavior with respect to morphisms:  

\begin{lemma}
	\label{lemma:FrattiniEpimorphism}\cite[Lemma 2.2]{MasbaumReid}: Let $\varphi: G\longrightarrow H$ be a group morphism. Then $\varphi(\Phi_f(G))\subset \Phi_f(\varphi(G)).$
\end{lemma}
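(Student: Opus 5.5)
The inclusion will follow from a correspondence between finite-index maximal subgroups of $\varphi(G)$ and certain finite-index maximal subgroups of $G$. First I reduce to the surjective case: write $H'=\varphi(G)$ and regard $\varphi$ as an epimorphism $G\to H'$. The statement to prove is then that $\varphi(\Phi_f(G))$ lies in every finite-index maximal subgroup $M$ of $H'$. If $H'$ has no such subgroups, then $\Phi_f(H')=H'$ by the convention that an empty intersection is the whole group, and the claim is trivial.

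Fix a maximal subgroup $M\subset H'$ of finite index, and set $\widetilde M=\varphi^{-1}(M)\subset G$. I will check three properties of $\widetilde M$.
\begin{enumerate}
\item $\widetilde M$ has finite index in $G$. Since $\varphi$ is surjective, it induces a bijection of coset spaces $G/\widetilde M\to H'/M$, so $[G:\widetilde M]=[H':M]<\infty$.
\item $\widetilde M$ is a proper subgroup of $G$. This holds because $\varphi(\widetilde M)=M\neq H'$, using surjectivity once more.
\item $\widetilde M$ is maximal. Suppose $\widetilde M\subseteq K\subseteq G$ for a subgroup $K$. Then $K\supseteq\ker\varphi$, so $K=\varphi^{-1}(\varphi(K))$. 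Also $M\subseteq\varphi(K)\subseteq H'$, so maximality of $M$ forces $\varphi(K)=M$ or $\varphi(K)=H'$. In the first case $K=\widetilde M$, and in the second $K=G$.
\end{enumerate}
This is the standard correspondence theorem for subgroups containing the kernel.

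Consequently $\widetilde M$ is one of the subgroups in the intersection defining $\Phi_f(G)$, so $\Phi_f(G)\subseteq\widetilde M$. Applying $\varphi$ gives $\varphi(\Phi_f(G))\subseteq\varphi(\widetilde M)=M$. Intersecting over all finite-index maximal $M$ in $H'$ yields $\varphi(\Phi_f(G))\subseteq\Phi_f(\varphi(G))$.

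There is no serious obstacle in this argument. The only point needing care is that every step uses surjectivity onto $\varphi(G)$, and not onto $H$. This is exactly why the statement is phrased with $\Phi_f(\varphi(G))$ rather than $\Phi_f(H)$: the inclusion into $\Phi_f(H)$ fails in general, because a finite-index maximal subgroup of $H$ need not meet $\varphi(G)$ in a maximal subgroup of $\varphi(G)$.
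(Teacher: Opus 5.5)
Your proof is correct: you pull back each finite-index maximal subgroup $M$ of $\varphi(G)$ to $\varphi^{-1}(M)$, which the correspondence theorem shows is again maximal of finite index in $G$. The paper gives no proof of its own and simply cites Masbaum--Reid's Lemma 2.2, and your pullback argument is the standard proof of that result.
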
 

Our second lemma concerns the interaction between finite-index Frattini subgroups and virtually abelian normal subgroups. 

\begin{lemma}
	\label{lemma:FrattiniVirtualAb} Let $H$ be a group, and $V\triangleleft H$ be a virtually abelian finitely generated subgroup of $\Phi_f(H)$. Then $V$ is nilpotent.
\end{lemma}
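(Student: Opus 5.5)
The plan is to reduce to finite normal subgroups of quotients of $H$, apply a Frattini-type argument there, and then pass back to $V$ by exploiting a characteristic free abelian subgroup of finite index.

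\textbf{Step 1 (a good abelian subgroup).} Since $V$ is finitely generated and virtually abelian, it has only finitely many subgroups of a given index. Pick an abelian subgroup of finite index and intersect all subgroups of that index. This gives a characteristic abelian subgroup $A_0$ of finite index, with $A_0\simeq \Z^n\times T$ and $T$ finite. Set $t=|T|$ and let $A=A_0^{t}$ be the subgroup of $t$-th powers. Then $A\simeq\Z^n$ is torsion-free, characteristic in $V$, and of finite index $m=[V:A]$. The same holds for each power subgroup $A^k$. Since $V\triangleleft H$, every characteristic subgroup of $V$ is normal in $H$.

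\textbf{Step 2 (finite normal subgroups inside $\Phi_f$ are nilpotent).} Suppose $Q$ is a group and $F\triangleleft Q$ is a finite subgroup with $F\subset \Phi_f(Q)$. I claim $F$ is nilpotent. Let $P$ be a Sylow subgroup of $F$. The Frattini argument gives $Q=F\,N_Q(P)$. Moreover $N_Q(P)$ contains the centralizer $C_Q(F)$, which has finite index because $F$ is finite and normal, so $N_Q(P)$ has finite index in $Q$. If $N_Q(P)\neq Q$, then $N_Q(P)$ lies in a maximal subgroup $M$ of finite index. This holds because the subgroups between the normal core of $N_Q(P)$ and $Q$ form a finite lattice. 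But then $F\subset \Phi_f(Q)\subset M$, so $Q=F N_Q(P)\subset M$, a contradiction. Hence every Sylow subgroup of $F$ is normal, and $F$ is nilpotent.

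Now apply this to $Q=H/A^k$ and $F=V/A^k$. By Lemma \ref{lemma:FrattiniEpimorphism}, the image of $\Phi_f(H)$ lies in $\Phi_f(H/A^k)$, so $F\subset\Phi_f(Q)$. We conclude that $V/A^k$ is nilpotent for every $k\geq 1$.

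\textbf{Step 3 (back to $V$).} Take a prime $p\nmid m$. The finite nilpotent group $V/A^p$ is the direct product of its Sylow subgroups. Its Sylow $p$-subgroup is $A/A^p$, since $[V:A]$ is prime to $p$. The $p'$-part therefore commutes with $A/A^p$, so $V$ acts trivially on $A/A^p\simeq(\Z/p)^n$. Thus the image of each $v\in V$ in $\mathrm{GL}_n(\Z)$, via conjugation on $A$, is congruent to the identity modulo infinitely many primes, hence is the identity. So $A$ is central in $V$. By Step 2 with $k=1$, $V/A$ is nilpotent of some class $c$, so $\gamma_{c+1}(V)\subset A\subset Z(V)$. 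It follows that $\gamma_{c+2}(V)=1$, and $V$ is nilpotent.

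I expect Step 2 to be the main point requiring care, specifically the existence of a maximal finite-index subgroup containing the proper finite-index subgroup $N_Q(P)$. This is what allows the finite-group Frattini argument to transfer to the infinite group $Q$.
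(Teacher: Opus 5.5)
Your proof is correct and follows the same outline as the paper's. Both arguments take a characteristic free abelian subgroup $A$ of finite index, prove that each finite quotient $V/A^k$ is nilpotent via the Frattini property, and then deduce that $A$ is central using the primes not dividing $[V:A]$. The only difference is in Step 2: the paper maps $H$ by conjugation into the finite group $\mathrm{Aut}(V/mA)$ and cites the nilpotence of Frattini subgroups of finite groups (so $\mathrm{Inn}(V/mA)$, hence $V/mA$, is nilpotent), while you run the Frattini argument directly in the possibly infinite quotient $H/A^k$, using that $N_Q(P)\supset C_Q(F)$ has finite index, which is a self-contained version of the same idea.
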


\begin{proof} Since $V$ is virtually abelian and finitely generated, let $A_0\simeq \Z^r$ be a free abelian subgroup of finite  index in $V.$ Since $V$ is finitely generated, the subgroup $A_0$ contains a finite-index subgroup $A$ which is characteristic in $V,$ and therefore normal in $H$.

We first prove that $V/mA$ is nilpotent for any $m\geq 1.$ Indeed, consider the morphism

$$\alpha: H\longrightarrow \mathrm{Aut(V/mA)}$$
induced by conjugation. Then Lemma \ref{lemma:FrattiniEpimorphism} implies that 
$\alpha(\Phi_f(H))\subset \Phi_f(\alpha(H)).$ However, $\alpha(H)\subset \mathrm{Aut}(V/mA)$ is finite. By \cite[Thm 9.3(a)]{DH}, we get that $\alpha(\Phi_f(H))$ is nilpotent, thus so is 
$$\alpha(V)\simeq (V/mA)/Z(V/mA).$$ Thus $V/mA$ is nilpotent.

In particular, for $m=1$ we get that $V/A$ is nilpotent. Now, let us prove that $A\subset Z(V),$ which would imply that $V$ is nilpotent.

Let $d=[V:A],$ then for any prime $l\nmid d,$ the group $A/lA$ is an abelian $l$-Sylow subgroup of the nilpotent group $V/lA,$ hence it is central. Therefore, $[V,A]\subset lA$ for all $l\nmid d,$ which implies that $A\subset Z(V)$ and concludes the proof.
	
\end{proof}
\section{Proof of Ivanov's finite-index Frattini conjecture}
\label{sec:IvanovProof}

\begin{proof}[Proof of Theorem \ref{thm:Ivanov}]
	Let $S$ be a closed surface of genus $g\geq 3$ and let $G$ be a finitely generated subgroup of $\mathrm{Mod}(S).$ We set 
	
	$$N:=\Phi_f(G), \ B:=N\cap J_1(S), \ C:=N\cap J_2(S)$$
	 We first argue that $C$ is abelian. 
	 
	For any prime $p\geq 5$, let $\rho_p$ be the $\mathrm{SO}(3)$-quantum representation at a primitive $p$-th root of unity. By Lemma \ref{lemma:FrattiniEpimorphism}, we have $\rho_p(N)=\rho_p(\Phi_f(G)) \subset \Phi_f(\rho_p(G))$. However, by Platonov's theorem, $\Phi_f(\rho_p(G))$ is nilpotent, hence so is $\rho_p(N)$.
	
	Now, $\rho_p(N)$ is a subgroup of a projective unitary group, which is compact. Thus, by Lemma \ref{lemma:NilpotentUnitary}, we have that $[\rho_p(C),\rho_p(C)]=\rho_p([C,C])$ is finite. However, $[C,C] \subset [J_2(S),J_2(S)]$, thus by Theorem \ref{thm:AMU-Johnson},  any non-trivial element of $[C,C]$ has an image of infinite order under $\rho_p$ for some prime $p\geq 5$. It follows that $[C,C]=1$, that is, $C$ is abelian. 
	
	Now, $B/C\simeq BJ_2(S)/J_2(S)$ is a subgroup of $J_1(S)/J_2(S),$ which is abelian by \cite{Johnson} (see also \cite[Theorem A]{Putman}).
	
	Finally, consider $q:G\longrightarrow \mathrm{Sp}_{2g}(\Z)$ the restriction of the homology representation. Then, $q(N)\simeq N/B,$ but by Lemma \ref{lemma:FrattiniEpimorphism}, we have
	$$ q(N)=q(\Phi_f(G))\subset \Phi_f(q(G))$$
	with the latter group nilpotent by Platonov's theorem. Therefore $N/B$ is nilpotent.
	
	Since $C$ and $B/C$ are abelian, and $N/B$ is nilpotent, we conclude that $N$ is solvable. By \cite[Theorems A and B]{BLM83}, this implies that $N$ is virtually abelian and finitely generated. 
	
	Finally, $N=\Phi_f(G)$ is virtually abelian, finitely generated and $N\triangleleft G$, thus we can apply Lemma \ref{lemma:FrattiniVirtualAb} to conclude that $N$ is nilpotent.\end{proof}

\begin{remark}
	Let $\overline{\mathrm{Mod}}(S)$ be the group of non-necessarily orientation-preserving isotopy classes of homeomorphisms of $S.$ Then for any $G\subset \overline{\mathrm{Mod}}(S)$ finitely generated, one has $\Phi_f(G)$ nilpotent as well. The proof above adapts to this case, with the following changes:
	If $G$ is not included in $\mathrm{Mod}(S),$ note that $G\cap \mathrm{Mod}(S)$ has index $2$, thus is a maximal subgroup. Hence $N=\Phi_f(G)\subset \mathrm{Mod}(S).$ Moreover, the homology representation extends to $q:\overline{\mathrm{Mod}}(S)\longrightarrow \mathrm{GL}_{2g}(\Z)$, and one can replace the use of the quantum representation $\rho_p$ with $\mathrm{Ind}_{\mathrm{Mod}(S)}^{\overline{\mathrm{Mod}}(S)}\mathrm{ad}(\rho_p).$
\end{remark}

\bibliographystyle{hamsalpha}

\bibliography{biblio}

@book {IvanovSurvey,
    AUTHOR = {Ivanov, Nikolai V.},
     TITLE = {Subgroups of {T}eichm\"uller modular groups},
    SERIES = {Translations of Mathematical Monographs},
    VOLUME = {115},
      NOTE = {Translated from the Russian by E. J. F. Primrose and revised
              by the author},
 PUBLISHER = {American Mathematical Society, Providence, RI},
      YEAR = {1992},
     PAGES = {xii+127},
      ISBN = {0-8218-4594-2},
   MRCLASS = {57M50 (20F38 30F60 57N05)},
  MRNUMBER = {1195787},
MRREVIEWER = {Athanase\ Papadopoulos},
       DOI = {10.1090/mmono/115},
       URL = {https://doi-org.proxy-bu2.ube.fr/10.1090/mmono/115},
}

@article {Korkmaz,
    AUTHOR = {Korkmaz, Mustafa},
     TITLE = {On the linearity of certain mapping class groups},
   JOURNAL = {Turkish J. Math.},
  FJOURNAL = {Turkish Journal of Mathematics},
    VOLUME = {24},
      YEAR = {2000},
    NUMBER = {4},
     PAGES = {367--371},
      ISSN = {1300-0098,1303-6149},
   MRCLASS = {57M60 (57N05)},
  MRNUMBER = {1803819},
MRREVIEWER = {Bruno\ P.\ Zimmermann},
}

@article {BLM83,
    AUTHOR = {Birman, Joan S. and Lubotzky, Alex and McCarthy, John},
     TITLE = {Abelian and solvable subgroups of the mapping class groups},
   JOURNAL = {Duke Math. J.},
  FJOURNAL = {Duke Mathematical Journal},
    VOLUME = {50},
      YEAR = {1983},
    NUMBER = {4},
     PAGES = {1107--1120},
      ISSN = {0012-7094,1547-7398},
   MRCLASS = {20F38 (30F20 57N05)},
  MRNUMBER = {726319},
MRREVIEWER = {William\ Harvey},
       DOI = {10.1215/S0012-7094-83-05046-9},
       URL = {https://doi-org.proxy-bu2.ube.fr/10.1215/S0012-7094-83-05046-9},
}

@book {DH,
    AUTHOR = {Doerk, Klaus and Hawkes, Trevor},
     TITLE = {Finite soluble groups},
    SERIES = {De Gruyter Expositions in Mathematics},
    VOLUME = {4},
 PUBLISHER = {Walter de Gruyter \& Co., Berlin},
      YEAR = {1992},
     PAGES = {xiv+891},
      ISBN = {3-11-012892-6},
   MRCLASS = {20D10 (20F17)},
  MRNUMBER = {1169099},
MRREVIEWER = {R.\ Bryce},
       DOI = {10.1515/9783110870138},
       URL = {https://doi-org.proxy-bu2.ube.fr/10.1515/9783110870138},
}

@article {Putman,
    AUTHOR = {Putman, Andrew},
     TITLE = {The {J}ohnson homomorphism and its kernel},
   JOURNAL = {J. Reine Angew. Math.},
  FJOURNAL = {Journal f\"ur die Reine und Angewandte Mathematik. [Crelle's
              Journal]},
    VOLUME = {735},
      YEAR = {2018},
     PAGES = {109--141},
      ISSN = {0075-4102,1435-5345},
   MRCLASS = {57N05 (20J06 57M07)},
  MRNUMBER = {3757472},
MRREVIEWER = {S\'ergio\ Tadao\ Martins},
       DOI = {10.1515/crelle-2015-0017},
       URL = {https://doi-org.proxy-bu2.ube.fr/10.1515/crelle-2015-0017},
}

@article {BHMV,
    AUTHOR = {Blanchet, C. and Habegger, N. and Masbaum, G. and Vogel, P.},
     TITLE = {Topological quantum field theories derived from the {K}auffman
              bracket},
   JOURNAL = {Topology},
  FJOURNAL = {Topology. An International Journal of Mathematics},
    VOLUME = {34},
      YEAR = {1995},
    NUMBER = {4},
     PAGES = {883--927},
      ISSN = {0040-9383},
   MRCLASS = {57N10 (57M25 81T40)},
  MRNUMBER = {1362791},
MRREVIEWER = {Justin\ D.\ Roberts},
       DOI = {10.1016/0040-9383(94)00051-4},
       URL = {https://doi-org.proxy-bu2.ube.fr/10.1016/0040-9383(94)00051-4},
}

@incollection {Ivanov,
    AUTHOR = {Ivanov, Nikolai V.},
     TITLE = {Fifteen problems about the mapping class groups},
 BOOKTITLE = {Problems on mapping class groups and related topics},
    SERIES = {Proc. Sympos. Pure Math.},
    VOLUME = {74},
     PAGES = {71--80},
 PUBLISHER = {Amer. Math. Soc., Providence, RI},
      YEAR = {2006},
      ISBN = {978-0-8218-3838-9; 0-8218-3838-5},
   MRCLASS = {57M07 (20F65 57M05)},
  MRNUMBER = {2264532},
MRREVIEWER = {Eric\ Michael\ Katerman},
       DOI = {10.1090/pspum/074/2264532},
       URL = {https://doi-org.proxy-bu2.ube.fr/10.1090/pspum/074/2264532},
}

@article {MasbaumReid,
    AUTHOR = {Masbaum, G. and Reid, A. W.},
     TITLE = {Frattini and related subgroups of mapping class groups},
   JOURNAL = {Tr. Mat. Inst. Steklova},
  FJOURNAL = {Trudy Matematicheskogo Instituta Imeni V. A. Steklova},
    VOLUME = {292},
      YEAR = {2016},
     PAGES = {149--158},
      ISSN = {0371-9685,3034-1809},
      ISBN = {5-7846-0137-7; 978-5-7846-0137-7},
   MRCLASS = {20F34 (20E15)},
  MRNUMBER = {3628458},
MRREVIEWER = {A.\ S.\ Kondrat\cprime ev},
       DOI = {10.1134/S037196851601009X},
       URL = {https://doi-org.proxy-bu2.ube.fr/10.1134/S037196851601009X},
}

@article {BigelowBudney,
    AUTHOR = {Bigelow, Stephen J. and Budney, Ryan D.},
     TITLE = {The mapping class group of a genus two surface is linear},
   JOURNAL = {Algebr. Geom. Topol.},
  FJOURNAL = {Algebraic \& Geometric Topology},
    VOLUME = {1},
      YEAR = {2001},
     PAGES = {699--708},
      ISSN = {1472-2747,1472-2739},
   MRCLASS = {20F36 (20C15 57M07)},
  MRNUMBER = {1875613},
MRREVIEWER = {Luis\ Paris},
       DOI = {10.2140/agt.2001.1.699},
       URL = {https://doi-org.proxy-bu2.ube.fr/10.2140/agt.2001.1.699},
}

@article {Johnson,
    AUTHOR = {Johnson, Dennis},
     TITLE = {The structure of the {T}orelli group. {II}. {A}
              characterization of the group generated by twists on bounding
              curves},
   JOURNAL = {Topology},
  FJOURNAL = {Topology. An International Journal of Mathematics},
    VOLUME = {24},
      YEAR = {1985},
    NUMBER = {2},
     PAGES = {113--126},
      ISSN = {0040-9383},
   MRCLASS = {57N05 (57M99)},
  MRNUMBER = {793178},
MRREVIEWER = {J.\ S.\ Birman},
       DOI = {10.1016/0040-9383(85)90049-7},
       URL = {https://doi-org.proxy-bu2.ube.fr/10.1016/0040-9383(85)90049-7},
}

@article {LarsenWang,
    AUTHOR = {Larsen, Michael and Wang, Zhenghan},
     TITLE = {Density of the {SO}(3) {TQFT} representation of mapping class
              groups},
   JOURNAL = {Comm. Math. Phys.},
  FJOURNAL = {Communications in Mathematical Physics},
    VOLUME = {260},
      YEAR = {2005},
    NUMBER = {3},
     PAGES = {641--658},
      ISSN = {0010-3616,1432-0916},
   MRCLASS = {57R56 (22E47 57M99)},
  MRNUMBER = {2183960},
MRREVIEWER = {R\u azvan\ Gelca},
       DOI = {10.1007/s00220-005-1382-x},
       URL = {https://doi-org.proxy-bu2.ube.fr/10.1007/s00220-005-1382-x},
}

@book {Knapp,
    AUTHOR = {Knapp, Anthony W.},
     TITLE = {Lie groups beyond an introduction},
    SERIES = {Progress in Mathematics},
    VOLUME = {140},
   EDITION = {Second},
 PUBLISHER = {Birkh\"auser Boston, Inc., Boston, MA},
      YEAR = {2002},
     PAGES = {xviii+812},
      ISBN = {0-8176-4259-5},
   MRCLASS = {22-01},
  MRNUMBER = {1920389},
}

@misc{Detcherry,
      title={The {A}ndersen-{M}asbaum-{U}eno conjecture for the derived subgroup of the {J}ohnson kernel}, 
      author={Renaud Detcherry},
      year={2026},
      eprint={arxiv:2603.29397},
      archivePrefix={arXiv},
      primaryClass={math.GT},
      url={https://arxiv.org/abs/2603.29397}, 
}

@article {Neumann,
    AUTHOR = {Neumann, B. H.},
     TITLE = {Groups with finite classes of conjugate elements},
   JOURNAL = {Proc. London Math. Soc. (3)},
  FJOURNAL = {Proceedings of the London Mathematical Society. Third Series},
    VOLUME = {1},
      YEAR = {1951},
     PAGES = {178--187},
      ISSN = {0024-6115,1460-244X},
   MRCLASS = {20.0X},
  MRNUMBER = {43779},
MRREVIEWER = {R.\ M.\ Thrall},
       DOI = {10.1112/plms/s3-1.1.178},
       URL = {https://doi-org.proxy-bu2.ube.fr/10.1112/plms/s3-1.1.178},
}

@article {Schur,
    AUTHOR = {Schur, J.},
     TITLE = {\"Uber die {D}arstellung der endlichen {G}ruppen durch
              gebrochen lineare {S}ubstitutionen},
   JOURNAL = {J. Reine Angew. Math.},
  FJOURNAL = {Journal f\"ur die Reine und Angewandte Mathematik. [Crelle's
              Journal]},
    VOLUME = {127},
      YEAR = {1904},
     PAGES = {20--50},
      ISSN = {0075-4102,1435-5345},
   MRCLASS = {99-04},
  MRNUMBER = {1580631},
       DOI = {10.1515/crll.1904.127.20},
       URL = {https://doi-org.proxy-bu2.ube.fr/10.1515/crll.1904.127.20},
}

@article {Platonov,
    AUTHOR = {Platonov, V. P.},
     TITLE = {Frattini subgroup of linear groups and finitary
              approximability},
   JOURNAL = {Dokl. Akad. Nauk SSSR},
  FJOURNAL = {Doklady Akademii Nauk SSSR},
    VOLUME = {171},
      YEAR = {1966},
     PAGES = {798--801},
      ISSN = {0002-3264},
   MRCLASS = {20.75},
  MRNUMBER = {212096},
MRREVIEWER = {J.\ Mennicke},
}

@article {AMU,
    AUTHOR = {Andersen, J\o rgen Ellegaard and Masbaum, Gregor and Ueno,
              Kenji},
     TITLE = {Topological quantum field theory and the {N}ielsen-{T}hurston
              classification of {$M(0,4)$}},
   JOURNAL = {Math. Proc. Cambridge Philos. Soc.},
  FJOURNAL = {Mathematical Proceedings of the Cambridge Philosophical
              Society},
    VOLUME = {141},
      YEAR = {2006},
    NUMBER = {3},
     PAGES = {477--488},
      ISSN = {0305-0041,1469-8064},
   MRCLASS = {57R56 (17B37 57M27)},
  MRNUMBER = {2281410},
MRREVIEWER = {Yasuyuki\ Kawahigashi},
       DOI = {10.1017/S0305004106009698},
       URL = {https://doi-org.proxy-bu2.ube.fr/10.1017/S0305004106009698},
}

\end{document}